\DeclareMathOperator{\Con}{Con}
\DeclareMathOperator{\T}{\Theta}
\DeclareMathOperator{\PR}{\Phi}
\newtheorem{theorem}{Theorem}
\newtheorem{definition}[theorem]{Definition}
\newtheorem{lemma}[theorem]{Lemma}
\newtheorem{proposition}[theorem]{Proposition}
\newtheorem{remark}[theorem]{Remark}
\newtheorem{example}[theorem]{Example}
\newtheorem{corollary}[theorem]{Corollary}
\title{Relatively residuated lattices and posets}
\author{Ivan Chajda and Helmut L\"anger}
\date{}
\begin{document}
\footnotetext[1]{Support of the research by \"OAD, project CZ~02/2019, and support of the research of the first author by IGA, project P\v rF~2019~015, is gratefully acknowledged.}
\maketitle
\begin{abstract}
It is known that every relatively pseudocomplemented lattice is residuated and, moreover, it is distributive. Unfortunately, non-distributive lattices with a unary operation satisfying properties similar to relative pseudocomplementation cannot be converted in residuated ones. The aim of our paper is to introduce a more general concept of a relative residuated lattice in such a way that also non-modular sectionally pseudocomplemented lattices are included. We derive several properties of relative residuated lattices which are similar to those known for residuated ones and extend our results to posets.
 \end{abstract}
 
{\bf AMS Subject Classification:} 06B10, 06A11, 06D15, 03B47

{\bf Keywords:} Relatively residuated lattice, relatively operator residuated poset, sectionally pseudocomplemented lattice, sectionally pseudocomplemented poset

The history of residuated lattices goes back to Dilworth in 1939, see e.g.\ \cite D. He generalized the situation known for relative pseudocomplemented lattices by replacing meet by a general binary operation $\odot$ and the operation of relative pseudocomplementation by a general binary operation $\rightarrow$. The usefulness of this approach found its precipitation in a number of papers and monographs devoted to residuated lattices. Nowadays this theory serves as an algebraic semantics of several kinds of substructural logic, in particular of fuzzy logics. In this context we refer to the monographs \cite B and \cite{JT} and the survey \cite{GJKO}.

Unfortunately, not every lattice equipped with a unary operation can be converted into a residuated one. The authors showed recently that if adjointness is replaced by left adjointness then every orthomodular lattice can be organized into a left residuated one, see \cite{CL17}. The first aim of this paper is to show that if adjointness is relativized to certain intervals of a given lattice then e.g.\ every sectionally pseudocomplemented lattice can be converted into such a relatively residuated lattice. It is well-known (see e.g.\ \cite D) that every relatively pseudocomplemented lattice is distributive. However, our sectionally pseudocomplemented lattices even need not be modular as shown below. Hence, we extended residuation also to this case.

The natural question arises if this concept can be generalized also to posets. It was shown recently by the authors (see \cite{CL18} and \cite{CLP}) that in some cases this is possible, in particular for relatively pseudocomplemented posets, Boolean posets or pseudo-orthomodular posets. Since we present results on sectionally pseudocomplemented lattices, we try to generalize our concepts also to sectionally pseudocomplemented posets and we show that every such poset can be organized into a so-called relatively operator residuated one.

Among other things this shows that also sectionally pseudocomplemented lattices can be considered as algebraic semantics of certain substructural logics and similarly also sectionally pseudocomplemented posets in the case when the logic in question need not have a defined disjunction.

Recall that a {\em lattice} $(L,\vee,\wedge,)$ is called {\em relatively pseudocomplemented} if for every $a,b\in L$ there exists a greatest element $x$ of $L$ satisfying $a\wedge x\leq b$. This element $x$ is called the {\em relative pseudocomplement of $a$ with respect to $b$}.

\begin{definition}
{\rm(}cf.\ {\rm\cite C)} Let $(L,\vee,\wedge)$ be a lattice and $a,b,d\in L$. Then $d$ is called the {\em sectional pseudocomplement of $a$ with respect to $b$} {\rm(}denoted by $a*b${\rm)} if
\[
d\text{ is the greatest element }x\text{ of }L\text{ satisfying }(a\vee b)\wedge x=b.
\]
Of course, any such $x$ must be in $[b,1]$. There exists at least one such $x$, namely $x=b$. A {\em sectionally pseudocomplemented lattice} is an algebra $\mathbf L=(L,\vee,\wedge,*)$ of type $(2,2,2)$ such that $(L,\vee,\wedge)$ is a lattice and for all $x,y\in L$, $x*y$ is the sectional pseudocomplement of $x$ with respect to $y$.
\end{definition}

Obviously, every relatively pseudocomplemented lattice is also sectionally pseudocomplemented because the relative pseudocomplement $(a\vee b)*b$ is in fact the sectional pseudocomplement of $a$ with respect to $b$. However, there exist sectionally  pseudocomplemented lattices which are not relatively pseudocomplemented.

It is well known that every relatively pseudocomplemented lattice is distributive. The advantage of sectionally pseudocomplemented lattices is that there exist also non-modular ones, see the following example taken from \cite C.

\begin{example}
The lattice $\mathbf N_5$ visualized in Fig.~1

\vspace*{-2mm}

\[
\setlength{\unitlength}{7mm}
\begin{picture}(6,9)
\put(3,2){\circle*{.3}}
\put(5,4){\circle*{.3}}
\put(1,5){\circle*{.3}}
\put(5,6){\circle*{.3}}
\put(3,8){\circle*{.3}}
\put(3,2){\line(-2,3)2}
\put(3,2){\line(1,1)2}
\put(5,4){\line(0,1)2}
\put(3,8){\line(-2,-3)2}
\put(3,8){\line(1,-1)2}
\put(2.875,1.25){$0$}
\put(5.4,3.85){$a$}
\put(.35,4.85){$b$}
\put(5.4,5.85){$c$}
\put(2.85,8.4){$1$}
\put(2.2,.3){{\rm Fig.~1}}
\end{picture}
\]

\vspace*{-3mm}

is sectionally pseudocomplemented but not relatively pseudocomplemented because the relative pseudocomplement of $c$ with respect to $a$ does not exist. The operation table for $*$ looks as follows:
\[
\begin{array}{c|ccccc}
* & 0 & a & b & c & 1 \\
\hline
0 & 1 & 1 & 1 & 1 & 1 \\
a & b & 1 & b & 1 & 1 \\
b & c & a & 1 & c & 1 \\
c & b & a & b & 1 & 1 \\
1 & 0 & a & b & c & 1
\end{array}
\]
\end{example}

\begin{remark}
If $(L,\vee,\wedge,*,1)$ is a sectionally pseudocomplemented lattice and $a,b\in L$ then $a\leq b$ if and only if $a*b=1$.
\end{remark}

\begin{definition}\label{def1}
A {\em relatively residuated lattice} is an algebra $\mathbf L=(L,\vee,\wedge,\odot,\rightarrow,1)$ of type $(2,2,$ $2,2,0)$ such that $(L,\vee,\wedge,1)$ is a lattice with $1$ and for all $a,b,c\in L$ the following conditions hold:
\begin{itemize}
\item $(L,\odot,1)$ is a commutative groupoid with neutral element,
\item $a\leq b$ implies $a\odot c\leq b\odot c$,
\item $(a\vee b)\odot(c\vee b)\leq b$ if and only if $c\vee b\leq a\rightarrow b$.
\end{itemize}
The last condition will be called {\em relative adjointness}. $\mathbf L$ is called {\em divisible} if it satisfies the identity $(x\vee y)\odot(x\rightarrow y)\approx y$.
\end{definition}

\begin{theorem}\label{th1}
Let $(L,\vee,\wedge,*,1)$ be an algebra of type $(2,2,2,0)$ such that $(L,\vee,$ $\wedge,1)$ is a lattice with $1$. Then $(L,\vee,\wedge,*)$ is a sectionally pseudocomplemented lattice if and only if $(L,\vee,\wedge,\wedge,*,$ $1)$ is a divisible relatively residuated lattice.
\end{theorem}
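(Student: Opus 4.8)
The plan is to prove the two implications separately, noting first that with $\odot:=\wedge$ and $\rightarrow:=*$ the first two bullet points of Definition~\ref{def1} come for free: $(L,\wedge,1)$ is a commutative groupoid with neutral element $1$ because $(L,\vee,\wedge,1)$ is a lattice with top element $1$, and $a\leq b$ implies $a\wedge c\leq b\wedge c$ by monotonicity of meet. Hence everything reduces to the interplay between the sectional pseudocomplement on one side and relative adjointness together with divisibility on the other. The one observation that drives both directions is that the set $\{c\vee b\mid c\in L\}$ equals the interval $[b,1]$, and that for $x\geq b$ one has $(a\vee b)\wedge x\geq(a\vee b)\wedge b=b$, so that for such $x$ the equation $(a\vee b)\wedge x=b$ is equivalent to the inequality $(a\vee b)\wedge x\leq b$.

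For the ``only if'' direction I would assume $(L,\vee,\wedge,*)$ is sectionally pseudocomplemented. Divisibility $(x\vee y)\wedge(x*y)\approx y$ is then just the defining equation of $x*y$, and it also yields $b\leq a*b$. To check relative adjointness I would fix $a,b,c$, set $x:=c\vee b$, and use the observation above: $(a\vee b)\wedge(c\vee b)\leq b$ is equivalent to $(a\vee b)\wedge x=b$, which by maximality of $a*b$ among the solutions of $(a\vee b)\wedge(\cdot)=b$ is equivalent to $x\leq a*b$, i.e.\ to $c\vee b\leq a\rightarrow b$. (The implication from $x\leq a*b$ back to $(a\vee b)\wedge x=b$ uses $b\leq x\leq a*b$ and monotonicity: $b\leq(a\vee b)\wedge x\leq(a\vee b)\wedge(a*b)=b$.)

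For the ``if'' direction I would assume $(L,\vee,\wedge,\wedge,*,1)$ is a divisible relatively residuated lattice, fix $a,b\in L$, and show that $a*b$ is the sectional pseudocomplement of $a$ with respect to $b$. Divisibility gives $(a\vee b)\wedge(a*b)=b$, so $a*b$ is one of the elements $x$ with $(a\vee b)\wedge x=b$ and in particular $a*b\geq b$. If $x$ is any element with $(a\vee b)\wedge x=b$, then $x\geq b$, so $x=x\vee b$ and $(a\vee b)\wedge(x\vee b)=b\leq b$; relative adjointness applied with $c:=x$ then gives $x\vee b\leq a\rightarrow b=a*b$, hence $x\leq a*b$. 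Thus $a*b$ is the greatest such $x$, which is precisely the definition of the sectional pseudocomplement, so $(L,\vee,\wedge,*)$ is sectionally pseudocomplemented.

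There is no deep obstacle here; the point that requires the most care is to notice that divisibility, and not merely relative adjointness, is what is needed in the ``if'' direction in order to recover both the equation $(a\vee b)\wedge(a*b)=b$ and the inequality $a*b\geq b$, and, dually, to keep straight in the reverse half of relative adjointness that one may replace $x$ by $x\vee b$ precisely because the relevant elements already lie above $b$.
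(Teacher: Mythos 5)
Your proof is correct and follows essentially the same route as the paper: identify $[b,1]$ with the set of elements $c\vee b$, observe that for $x\geq b$ the equation $(a\vee b)\wedge x=b$ is equivalent to $(a\vee b)\wedge x\leq b$, and match the ``greatest solution'' definition of $a*b$ with relative adjointness, with divisibility then being the defining equation of $a*b$. The only cosmetic difference is that the paper derives divisibility as an automatic consequence of relative adjointness when $\odot=\wedge$ (since $a*b\geq b$ forces $(a\vee b)\wedge(a*b)=b$), whereas you invoke it as a hypothesis in the ``if'' direction, which is equally valid.
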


\begin{proof}
For all $a,b\in L$ the following are equivalent:
\begin{align*}
& a*b\text{ is the sectional pseudocomplement of }a\text{ with respect to }b, \\
& \text{for all }c\in L\text{ we have }(a\vee b)\wedge(c\vee b)\leq b\text{ if and only if }c\vee b\leq a*b.
\end{align*}
If this is the case then $(a\vee b)\wedge(a*b)\leq b$ which together with $a*b\geq b$ yields $(a\vee b)\wedge(a*b)=b$ proving divisibility.
\end{proof}

It is worth noticing that there are relatively residuated lattices where $\odot$ does not coincide with $\wedge$ and $\rightarrow$ is not the sectional pseudocomplement. The next example shows such a case. Although this lattice is even sectionally pseudocomplemented, we define $\odot$ and $\rightarrow$ in a different way.

\begin{example}\label{ex1}
If $(L,\vee,\wedge)=(\{0,a,1\},\vee,\wedge)$ denotes the three-element lattice and $\odot$ and $\rightarrow$ are defined by
\[
\begin{array}{c|ccc}
\odot & 0 & a & 1 \\
\hline
  0   & 0 & 0 & 0 \\
	a   & 0 & 0 & a \\
	1   & 0 & a & 1
\end{array}
\quad\quad\quad
\begin{array}{c|ccc}
\rightarrow & 0 & a & 1 \\
\hline
     0      & 1 & 1 & 1 \\
		 a      & a & 1 & 1 \\
		 1      & 0 & a & 1
\end{array}
\]
then $(L,\vee,\wedge,\odot,\rightarrow,1)$ is a relatively residuated lattice which is not divisible since
\[
(a\vee0)\wedge(a\rightarrow0)=a\wedge a=a\neq0.
\]
\end{example}

Recall that a lattice $(L,\vee,\wedge)$ is called {\em meet-semidistributive} if $a,b,c\in L$ and $a\wedge b=a\wedge c$ together imply $a\wedge(b\vee c)=a\wedge b$.

The following result follows by Theorem~1 in \cite C and Theorem~\ref{th1}.

\begin{corollary}
Let $(L,\vee,\wedge,1)$ be a finite lattice with $1$. Then the following are equivalent:
\begin{enumerate}
\item[{\rm(i)}] There exists a binary operation $*$ on $L$ such that $(L,\vee,\wedge,\wedge,*,1)$ is relatively residuated,
\item[{\rm(ii)}] $(L,\vee,\wedge)$ is meet-semidistributive.
\end{enumerate}
\end{corollary}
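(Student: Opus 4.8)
The plan is to deduce the Corollary by combining Theorem~\ref{th1} with the characterization of finite lattices admitting a sectional pseudocomplementation due to \cite C. First I would observe that in Definition~\ref{def1} the choice $\odot=\wedge$ makes the first two conditions automatic on any lattice $(L,\vee,\wedge,1)$ with top element $1$: $(L,\wedge,1)$ is a commutative groupoid with neutral element $1$, and $a\leq b$ trivially gives $a\wedge c\leq b\wedge c$. Hence, for a fixed binary operation $*$ on $L$, the algebra $(L,\vee,\wedge,\wedge,*,1)$ is a relatively residuated lattice if and only if relative adjointness holds, that is, if and only if for all $a,b,c\in L$ one has $(a\vee b)\wedge(c\vee b)\leq b$ exactly when $c\vee b\leq a*b$.

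Next I would match this last condition with the equivalence established inside the proof of Theorem~\ref{th1}, which says that, for given $a,b$, the element $a*b$ is the sectional pseudocomplement of $a$ with respect to $b$ precisely when, for all $c\in L$, $(a\vee b)\wedge(c\vee b)\leq b$ if and only if $c\vee b\leq a*b$. Quantifying over all $a,b$, this shows that $(L,\vee,\wedge,\wedge,*,1)$ is relatively residuated if and only if $(L,\vee,\wedge,*)$ is a sectionally pseudocomplemented lattice (which, by Theorem~\ref{th1}, is then automatically divisible). Therefore condition~(i) is equivalent to saying that the lattice reduct $(L,\vee,\wedge)$ can be equipped with \emph{some} operation making it sectionally pseudocomplemented.

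It then remains to invoke Theorem~1 of \cite C, according to which a finite lattice admits a sectional pseudocomplementation if and only if it is meet-semidistributive; this yields the equivalence of (i) and (ii). The argument is essentially a bookkeeping exercise, so I do not expect a real obstacle; the only points that need care are keeping track of the quantifier over $*$ (the operation witnessing (i) is literally the sectional pseudocomplementation from \cite C) and verifying the two ``trivial'' axioms of Definition~\ref{def1} for $\odot=\wedge$. All the genuine content is supplied by Theorem~\ref{th1} and by Theorem~1 of \cite C.
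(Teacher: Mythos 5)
Your argument is correct, but it is not the proof the paper actually writes out. You reduce (i) to the existence of a sectional pseudocomplementation (via the adjointness equivalence from the proof of Theorem~\ref{th1}, correctly noting that the two remaining axioms of Definition~\ref{def1} are automatic for $\odot=\wedge$), and then delegate the equivalence ``finite sectionally pseudocomplemented $\Leftrightarrow$ meet-semidistributive'' entirely to Theorem~1 of \cite{C}. This is exactly the route the authors announce in the sentence preceding the corollary, so it is legitimate; however, their written proof is self-contained and does not use \cite{C} as a black box. For (i)$\Rightarrow$(ii) they argue directly from relative adjointness: $a\wedge b=a\wedge c$ gives $b,c\le a*(a\wedge b)$, hence $b\vee c\le a*(a\wedge b)$, hence $a\wedge(b\vee c)\le a\wedge b$. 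For (ii)$\Rightarrow$(i) they explicitly construct $a*b:=\bigvee\{x\in[b,1]\mid(a\vee b)\wedge x=b\}$ and use finiteness together with meet-semidistributivity to verify $(a\vee b)\wedge(a*b)=b$ --- in effect reproving the cited result of \cite{C}. Your version buys brevity and makes the logical dependencies transparent; the paper's version buys self-containedness and exhibits where finiteness and meet-semidistributivity actually enter. If you want your proof to stand on its own, you should at least sketch why meet-semidistributivity of a finite lattice forces the join of all solutions of $(a\vee b)\wedge x=b$ in $[b,1]$ to be again a solution, since that is the only non-bookkeeping step.
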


\begin{proof}
Let $a,b,c\in L$. \\
(i) $\Rightarrow$ (ii): \\
Assume $a\wedge b=a\wedge c$. Because of $(a\vee(a\wedge b))\wedge(b\vee(a\wedge b))\leq a\wedge b$ we have $b=b\vee(a\wedge b)\leq a*(a\wedge b)$ according to relative adjointness. Analogously we obtain $c\leq a*(a\wedge c)=a*(a\wedge b)$. Hence $(b\vee c)\vee(a\wedge b)=b\vee c\leq a*(a\wedge b)$ whence $a\wedge(b\vee c)=(a\vee(a\wedge b))\wedge((b\vee c)\vee(a\wedge b))\leq a\wedge b$ again according to relative adjointness, i.e.\ $a\wedge(b\vee c)=a\wedge b$. \\
(ii) $\Rightarrow$ (i): \\
We define $a*b:=\bigvee\{x\in[b,1]\mid(a\vee b)\wedge x=b\}$. Since $(a\vee b)\wedge b=b$ we have $b\leq a*b$ and since $(L,\vee,\wedge)$ is meet-semidistributive we have $(a\vee b)\wedge(a*b)=b$. Finally, if $b\leq c$ and $(a\vee b)\wedge c=b$ then $c\leq a*b$. This shows that $(L,\vee,\wedge,*,1)$ is sectionally pseudocomplemented.
\end{proof}

The next theorem lists several important properties of relative residuated lattices showing essential similarities with residuated lattices.

\begin{theorem}\label{th2}
Let $\mathbf L=(L,\vee,\wedge,\odot,\rightarrow,1)$ be a relatively residuated lattice and $a,b,c\in L$. Then the following hold:
\begin{enumerate}
\item[{\rm(i)}] $1\rightarrow x\approx x$,
\item[{\rm(ii)}] $a\leq b$ if and only if $a\rightarrow b=1$,
\item[{\rm(iii)}] $a\odot(a\vee b)\leq a$,
\item[{\rm(iv)}] $b\leq a\rightarrow b$,
\item[{\rm(v)}] $(a\vee b)\odot(a\rightarrow b)\leq b$,
\item[{\rm(vi)}] $x\rightarrow y\approx(x\vee y)\rightarrow y$,
\item[{\rm(vii)}] $a\vee b\leq(a\rightarrow b)\rightarrow b$,
\item[{\rm(viii)}] $a\leq b$ implies $b\rightarrow c\leq a\rightarrow c$,
\item[{\rm(ix)}] if $\mathbf L$ has a $0$ then $a\odot b=0$ if and only if $a\leq b\rightarrow0$ and hence $0\odot x\approx0$.
\end{enumerate}
\end{theorem}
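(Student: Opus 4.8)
The whole theorem is a cascade of applications of relative adjointness, each with a carefully chosen instantiation of the three variables, used together with commutativity, the neutral element $1$, and monotonicity of $\odot$. I would therefore prove the items essentially in the listed order, letting the early ones feed the later ones, and dispatch (i) and (ii) straight from relative adjointness. For (i), put $a:=1$, $b:=x$: since $1\vee x=1$ and $1$ is neutral, the equivalence collapses to ``$c\vee x\leq x$ iff $c\vee x\leq 1\rightarrow x$'', and feeding $c:=x$ and then $c:=1\rightarrow x$ yields $x\leq 1\rightarrow x\leq x$. For (ii), the forward direction uses $a\vee b=b$ and the instance $c:=1$ to get $1\leq a\rightarrow b$; the converse uses that $a\rightarrow b=1$ makes the right-hand side of relative adjointness vacuous, so $c:=1$ gives $a\vee b\leq b$, i.e.\ $a\leq b$.

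The pivotal step is (iii). I would apply relative adjointness ``at level $a$'', i.e.\ with the $b$-role played by $a$ and with the free join-term $b$: $(a\vee a)\odot(b\vee a)\leq a$ iff $b\vee a\leq a\rightarrow a$; since $a\rightarrow a=1$ by (ii) the right side is automatic, giving $a\odot(a\vee b)\leq a$. Everything else then cascades. Item (iv) is relative adjointness for $(a,b)$ with $c:=b$: the left side becomes $(a\vee b)\odot b\leq b$, which is (iii) with the variables renamed, so $b\leq a\rightarrow b$. Item (v) is relative adjointness for $(a,b)$ with $c:=a\rightarrow b$: by (iv) one has $(a\rightarrow b)\vee b=a\rightarrow b$, so the trivial right-hand inequality $a\rightarrow b\leq a\rightarrow b$ transports to $(a\vee b)\odot(a\rightarrow b)\leq b$. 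Item (vi) follows from the observation that the left-hand sides of relative adjointness for $(a,b)$ and for $(a\vee b,b)$ are literally the same expression, since $(a\vee b)\vee b=a\vee b$; hence $a\rightarrow b$ and $(a\vee b)\rightarrow b$ dominate exactly the same elements of $[b,1]$, and as both lie in $[b,1]$ by (iv) they coincide. For (vii) I would use relative adjointness for $(a\rightarrow b,b)$ with $c:=a$, reducing the left side to $(a\rightarrow b)\odot(a\vee b)\leq b$, which is (v) after commuting. For (viii), with $a\leq b$, apply relative adjointness for $(a,c)$ with free term $b\rightarrow c$: the left side is $(a\vee c)\odot(b\rightarrow c)\leq c$, and this holds because $a\vee c\leq b\vee c$ gives, by monotonicity, $(a\vee c)\odot(b\rightarrow c)\leq(b\vee c)\odot(b\rightarrow c)\leq c$ by (v); the right side then reads $b\rightarrow c\leq a\rightarrow c$. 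Finally, for (ix), relative adjointness for $(b,0)$ — after using that $0$ is the bottom element, so $\leq 0$ means $=0$ and the join-terms vanish — reads ``$b\odot c=0$ iff $c\leq b\rightarrow 0$''; commutativity rewrites this as the asserted equivalence, and specialising to $a:=0$ (with the trivial $0\leq x\rightarrow 0$) yields $0\odot x\approx 0$.

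The one genuinely delicate point is (iii): relative adjointness only ever concludes that a product of shape $(u\vee b)\odot(v\vee b)$ falls below $b$, so to bound $a\odot(a\vee b)$ by $a$ — an inequality whose right side is $a$, not the ambient ``$b$'' — one has to set up the instance so that $a$ itself plays the role of the bounding element, and then see that the side condition $a\vee b\leq a\rightarrow a$ costs nothing once (ii) supplies $a\rightarrow a=1$. After (iii) and (iv) are in hand, the rest is a routine unwinding of the adjointness equivalence; the work is entirely in choosing the substitutions.
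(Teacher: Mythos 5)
Your proof is correct and follows essentially the same route as the paper: every item is obtained by instantiating relative adjointness with the same substitutions the authors use (e.g.\ (iii) via $a\rightarrow a=1$, (v) via $c:=a\rightarrow b$ together with (iv), (viii) via monotonicity of $\odot$ and (v)). The only cosmetic difference is in (vi), where you observe that the two adjointness instances have literally identical left-hand sides and conclude via the coincidence of down-sets in $[b,1]$, which is just a more conceptual packaging of the paper's two chains of equivalences.
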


\begin{proof}
\
\begin{enumerate}
\item[(i)] The following are equivalent:
\begin{align*}
                                     a & \leq a, \\
               (1\vee a)\odot(a\vee a) & \leq a, \\
                               a\vee a & \leq1\rightarrow a, \\
                                     a & \leq1\rightarrow a, \\
                (1\rightarrow a)\vee a & \leq1\rightarrow a, \\
(1\vee a)\odot((1\rightarrow a)\vee a) & \leq a, \\
                        1\rightarrow a & \leq a.
\end{align*}
\item[(ii)] The following are equivalent:
\begin{align*}
                      a & \leq b, \\
                a\vee b & \leq b, \\
(a\vee b)\odot(1\vee b) & \leq b, \\
                1\vee b & \leq a\rightarrow b, \\
         a\rightarrow b & =1.
\end{align*}
\item[(iii)] The following are equivalent:
\begin{align*}
                b\vee a & \leq1, \\
                b\vee a & \leq a\rightarrow a, \\
(a\vee a)\odot(b\vee a) & \leq a, \\
        a\odot(a\vee b) & \leq a.
\end{align*}
\item[(iv)] The following are equivalent:
\begin{align*}
        b\odot(a\vee b) & \leq b, \\
       (a\vee b)\odot b & \leq b, \\
(a\vee b)\odot(b\vee b) & \leq b, \\
                b\vee b & \leq a\rightarrow b, \\
                      b & \leq a\rightarrow b.
\end{align*}
\item[(v)] The following are equivalent:
\begin{align*}
                        a\rightarrow b & \leq a\rightarrow b, \\
                (a\rightarrow b)\vee b & \leq a\rightarrow b, \\
(a\vee b)\odot((a\rightarrow b)\vee b) & \leq b, \\
        (a\vee b)\odot(a\rightarrow b) & \leq b.
\end{align*}
\item[(vi)] The following are equivalent:
\begin{align*}
                (a\vee b)\odot(a\rightarrow b) & \leq b, \\
((a\vee b)\vee b)\odot((a\rightarrow b)\vee b) & \leq b, \\
                        (a\rightarrow b)\vee b & \leq(a\vee b)\rightarrow b, \\
                                a\rightarrow b & \leq(a\vee b)\rightarrow b.
\end{align*}
Conversely, the following are equivalent:
\begin{align*}
((a\vee b)\vee b)\odot((a\vee b)\rightarrow b) & \leq b, \\
(a\vee b)\odot(((a\vee b)\rightarrow b)\vee b) & \leq b, \\
((a\vee b)\rightarrow b)\vee b & \leq a\rightarrow b, \\
(a\vee b)\rightarrow b & \leq a\rightarrow b.
\end{align*}
\item[(vii)] The following are equivalent:
\begin{align*}
        (a\vee b)\odot(a\rightarrow b) & \leq b, \\
        (a\rightarrow b)\odot(a\vee b) & \leq b, \\
((a\rightarrow b)\vee b)\odot(a\vee b) & \leq b, \\
                               a\vee b & \leq(a\rightarrow b)\rightarrow b.
\end{align*}
\item[(viii)] Everyone of the following assertions implies the next one:
\begin{align*}
        (b\vee c)\odot(b\rightarrow c) & \leq c, \\
(a\vee c)\odot((b\rightarrow c)\vee c) & \leq c, \\
                (b\rightarrow c)\vee c & \leq a\rightarrow c, \\
                        b\rightarrow c & \leq a\rightarrow c.
\end{align*}
\item[(ix)] If $\mathbf L$ has a $0$ then the following are equivalent:
\begin{align*}
             a\odot b & =0, \\
             b\odot a & =0, \\
(b\vee0)\odot(a\vee0) & \leq0, \\
               a\vee0 & \leq b\rightarrow0, \\
                    a & \leq b\rightarrow0.
\end{align*}
\end{enumerate}
\end{proof}

Next we prove that relatively residuated lattices satisfy rather strong congruence properties similarly to residuated lattices.

\begin{theorem}\label{th4}
Every relatively residuated lattice is arithmetical, i.e.\ congruence per- \\
mutable and congruence distributive.
\end{theorem}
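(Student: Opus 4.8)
The plan is to find a ternary \emph{Pixley term} for relatively residuated lattices and then appeal to Pixley's theorem, according to which a variety is arithmetical precisely when it possesses a ternary term $p$ satisfying $p(x,y,y)\approx x$, $p(x,x,y)\approx y$ and $p(x,y,x)\approx x$. One could also split the task: congruence distributivity is automatic because the reduct is a lattice, hence carries the majority term $(x\wedge y)\vee(y\wedge z)\vee(z\wedge x)$, so only congruence permutability would remain, for which a Maltsev term suffices; but a single Pixley term settles both at once and is no harder to verify.

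The term I would propose is
\[
p(x,y,z):=\bigl((x\rightarrow y)\rightarrow z\bigr)\wedge\bigl((z\rightarrow y)\rightarrow x\bigr)\wedge(x\vee z),
\]
which uses only $\vee,\wedge,\rightarrow,1$ --- neither $\odot$ nor divisibility enters. To verify the three identities I would use nothing beyond parts (i), (ii), (iv) and (vii) of Theorem~\ref{th2}, namely $1\rightarrow x\approx x$, $x\rightarrow x\approx1$, $b\le a\rightarrow b$ and $a\vee b\le(a\rightarrow b)\rightarrow b$, together with absorption and idempotency in the lattice reduct. For $p(x,y,x)$ the two outer conjuncts coincide and $x\vee x=x$, so $p(x,y,x)=\bigl((x\rightarrow y)\rightarrow x\bigr)\wedge x$, which equals $x$ since $x\le(x\rightarrow y)\rightarrow x$ by (iv). For $p(x,x,y)$, from $x\rightarrow x=1$ and $1\rightarrow y=y$ the first conjunct becomes $y$, while $(y\rightarrow x)\rightarrow x\ge y\vee x\ge y$ by (vii) and $x\vee y\ge y$, so the whole meet equals $y$. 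The identity $p(x,y,y)\approx x$ is symmetric: $y\rightarrow y=1$ turns the middle conjunct into $x$, and $(x\rightarrow y)\rightarrow y\ge x\vee y\ge x$ by (vii), while $x\vee y\ge x$, so the meet is $x$.

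Once the three Pixley identities are in place, Pixley's theorem immediately gives both congruence permutability and congruence distributivity, so every relatively residuated lattice is arithmetical. I expect no real obstacle here: the only non-mechanical step is guessing the right term, and after that everything reduces to the two inequalities $b\le a\rightarrow b$ and $a\vee b\le(a\rightarrow b)\rightarrow b$ supplied by Theorem~\ref{th2}. If one wished to avoid quoting Pixley's theorem, the same computation shows that $q(x,y,z):=\bigl((x\rightarrow y)\rightarrow z\bigr)\wedge\bigl((z\rightarrow y)\rightarrow x\bigr)$ is a Maltsev term, which yields permutability via Maltsev's criterion, and this can be combined with the lattice majority term to obtain distributivity.
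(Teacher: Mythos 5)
Your proof is correct and essentially the paper's own argument: the paper establishes congruence permutability by exactly your Maltsev term $\bigl((x\rightarrow y)\rightarrow z\bigr)\wedge\bigl((z\rightarrow y)\rightarrow x\bigr)$ (using parts (i), (ii) and (vii) of Theorem~\ref{th2}) and obtains distributivity from the lattice reduct, which is precisely the alternative you sketch in your final paragraph. The only point to watch is that relatively residuated lattices are not presented as a variety (relative adjointness is a biconditional, not an identity), so you should appeal to the algebra-wise direction of Pixley's theorem --- a Pixley term operation on a single algebra already makes that algebra arithmetical --- which is the direction that holds without any variety hypothesis.
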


\begin{proof}
Let $\mathbf L=(L,\vee,\wedge,\odot,\rightarrow,1)$ be a relatively residuated lattice, $a,b,c\in A$ and $\Theta,\Phi\in\Con\mathbf L$. We use (i), (ii) and (vii) of Theorem~\ref{th2}. If $(a,c)\in\Theta\circ\Phi$ then there exists some $b\in L$ with $a\T b\PR c$ and hence
\begin{align*}
a & =((a\rightarrow c)\rightarrow c)\wedge a=((a\rightarrow c)\rightarrow c)\wedge((c\rightarrow c)\rightarrow a)\PR \\
& \PR((a\rightarrow b)\rightarrow c)\wedge((c\rightarrow b)\rightarrow a)\T((a\rightarrow a)\rightarrow c)\wedge((c\rightarrow a)\rightarrow a)= \\
& =c\wedge((c\rightarrow a)\rightarrow a)=c
\end{align*}
showing $(a,c)\in\Phi\circ\Theta$. Hence $\Theta\circ\Phi\subseteq\Phi\circ\Theta$. Since $\Theta$ and $\Phi$ were arbitrary congruences on $\mathbf L$, we obtain $\Theta\circ\Phi=\Phi\circ\Theta$. Congruence distributivity of $\mathbf L$ follows since $(L,\vee,\wedge)$ is a lattice.
\end{proof}

We are going to show that if $\mathbf L$ is a lattice with two additional binary operations $\odot$ and $\rightarrow$ where $\odot$ is monotone and condition (v) of Theorem~\ref{th2} is satisfied then $\mathbf L$ satisfies one implication of relative adjointness.

\begin{lemma}\label{lem1}
Let $(L,\vee,\wedge)$ be a lattice and $\odot$ and $\rightarrow$ binary operations on $L$ such that for all $a,b,c\in L$ the following conditions hold:
\begin{itemize}
\item $b\leq c$ implies $a\odot b\leq a\odot c$,
\item $(a\vee b)\odot(a\rightarrow b)\leq b$.
\end{itemize}
Then for all $a,b,c\in L$ the following holds:
\begin{itemize}
\item $c\vee b\leq a\rightarrow b$ implies $(a\vee b)\odot(c\vee b)\leq b$.
\end{itemize}
\end{lemma}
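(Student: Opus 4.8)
The plan is to derive the desired implication directly from the two hypotheses together with monotonicity of $\odot$ in its left argument, which follows from commutativity having \emph{not} been assumed here --- so I must be careful: the lemma only assumes monotonicity in the \emph{right} argument. Thus the argument should be arranged so that $\odot$ is only ever enlarged in its second slot. Assume $c\vee b\leq a\rightarrow b$. The key observation is that $(a\vee b)\odot(a\rightarrow b)\leq b$ by the second hypothesis, and that $c\vee b\leq a\rightarrow b$ lets me replace the second factor $a\rightarrow b$ by the \emph{smaller} element $c\vee b$; but replacing by a smaller element \emph{decreases} the product, which is the wrong direction. So instead I would run the inequality the other way: from $c\vee b\leq a\rightarrow b$ and right-monotonicity I get $(a\vee b)\odot(c\vee b)\leq(a\vee b)\odot(a\rightarrow b)$, and then the second hypothesis gives $(a\vee b)\odot(a\rightarrow b)\leq b$; chaining these two inequalities yields $(a\vee b)\odot(c\vee b)\leq b$, which is exactly the conclusion.

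So the proof is genuinely a two-line transitivity argument: first I would invoke right-monotonicity of $\odot$ applied to the hypothesis $c\vee b\leq a\rightarrow b$ with left argument fixed at $a\vee b$, obtaining $(a\vee b)\odot(c\vee b)\leq(a\vee b)\odot(a\rightarrow b)$; then I would invoke the second hypothesis $(a\vee b)\odot(a\rightarrow b)\leq b$; finally transitivity of $\leq$ closes the gap. I would present this as a short display chain of inequalities rather than prose.

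I do not anticipate a real obstacle here, since the statement is essentially the ``easy half'' of relative adjointness and is proved by monotonicity alone. The one thing I would be careful about is \emph{not} to accidentally use monotonicity in the left argument of $\odot$ or any commutativity, since the lemma deliberately omits those hypotheses (it is meant to apply in a setting weaker than Definition~\ref{def1}); keeping the left factor frozen as $a\vee b$ throughout ensures the argument uses exactly the two listed properties and nothing more.
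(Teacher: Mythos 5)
Your proof is correct and is exactly the paper's argument: apply monotonicity of $\odot$ in the second argument to $c\vee b\leq a\rightarrow b$ with left factor $a\vee b$ fixed, then chain with $(a\vee b)\odot(a\rightarrow b)\leq b$. Your care in avoiding left-monotonicity or commutativity matches the hypotheses precisely.
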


\begin{proof}
If $a,b,c\in L$ and $c\vee b\leq a\rightarrow b$ then $(a\vee b)\odot(c\vee b)\leq(a\vee b)\odot(a\rightarrow b)\leq b$.
\end{proof}

Although residuated lattices satisfy the identity
\[
(x\odot y)\rightarrow z\approx x\rightarrow(y\rightarrow z)
\]
for commutative $\odot$ (see e.g.\ \cite B), this need not hold in the relatively residuated case. However, if we assume the condition
\[
((a\vee b)\odot(c\vee b))\rightarrow b\leq(c\vee b)\rightarrow(a\rightarrow b)
\]
(which can be re-written as an identity) then we are able to prove also the converse implication of relative adjointness.

\begin{theorem}
Let $\mathcal V$ denote the variety of algebras $(L,\vee,\wedge,\odot,\rightarrow,1)$ of type $(2,2,2,2,0)$ satisfying the identities of lattices with $1$, the identities of commutative groupoids with $1$ and the following conditions {\rm(}which can be re-written as identities{\rm)} for all $a,b,c\in L$:
\begin{enumerate}
\item[{\rm(i)}] $((a\vee b)\odot(c\vee b))\rightarrow b\leq(c\vee b)\rightarrow(a\rightarrow b)$,
\item[{\rm(ii)}] $(a\vee b)\odot(a\rightarrow b)\leq b$,
\item[{\rm(iii)}] $a\odot b\leq a\odot(b\vee c)$,
\item[{\rm(iv)}] $x\rightarrow(x\vee y)\approx1$.
\end{enumerate}
Then $\mathcal V$ is a variety of relative residuated lattices.
\end{theorem}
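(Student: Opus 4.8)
The plan is to verify, item by item, the three defining conditions of a relatively residuated lattice from Definition~\ref{def1}, using only the equational hypotheses (i)--(iv). The lattice axioms (with $1$) and the axioms of a commutative groupoid with neutral element for $\odot$ are literally among the defining identities of $\mathcal V$, so $(L,\vee,\wedge,1)$ is a lattice with $1$ and $(L,\odot,1)$ is a commutative groupoid with neutral element. For the monotonicity clause ``$a\le b$ implies $a\odot c\le b\odot c$'' I would argue: if $a\le b$ then $a\vee b=b$, so by commutativity and (iii), $a\odot c=c\odot a\le c\odot(a\vee b)=c\odot b=b\odot c$. (In the same way (iii) yields monotonicity of $\odot$ in the second argument, which will be needed below.)

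It then remains to establish relative adjointness, the equivalence $(a\vee b)\odot(c\vee b)\le b\iff c\vee b\le a\rightarrow b$. The direction ``$\Leftarrow$'' is precisely the content of Lemma~\ref{lem1}, whose two hypotheses are exactly second-argument monotonicity of $\odot$ (which, as noted, follows from (iii)) and condition (ii); so Lemma~\ref{lem1} applies verbatim. For the direction ``$\Rightarrow$'' I would first record the auxiliary observation that (iv) forces $x\rightarrow y=1$ whenever $x\le y$, since then $y=x\vee y$ and hence $x\rightarrow y=x\rightarrow(x\vee y)=1$. Now assume $(a\vee b)\odot(c\vee b)\le b$. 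The auxiliary observation gives $\bigl((a\vee b)\odot(c\vee b)\bigr)\rightarrow b=1$, and then (i) forces $(c\vee b)\rightarrow(a\rightarrow b)=1$. To turn this back into an order relation, instantiate (ii) reading its two variables as $c\vee b$ and $a\rightarrow b$: this yields $\bigl((c\vee b)\vee(a\rightarrow b)\bigr)\odot\bigl((c\vee b)\rightarrow(a\rightarrow b)\bigr)\le a\rightarrow b$, and since the second factor equals $1$ and $1$ is neutral for $\odot$, the left-hand side is just $(c\vee b)\vee(a\rightarrow b)$; hence $c\vee b\le a\rightarrow b$, completing relative adjointness.

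All the steps are short, essentially one-line, chases through (i)--(iv). The only point that is not mechanical is the last move: having derived $(c\vee b)\rightarrow(a\rightarrow b)=1$ from (i), one recovers the inequality $c\vee b\le a\rightarrow b$ by feeding it into (ii) and exploiting that $1$ is the $\odot$-neutral element. This is the hinge of the argument and is exactly why condition (i) (together with the unit law) was imposed. Finally I would remark, as the statement already flags, that since each inequality $p\le q$ is the identity $p\vee q\approx q$, the conditions (i)--(iv) are all equational, so $\mathcal V$ is genuinely a variety.
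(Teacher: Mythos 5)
Your proposal is correct and follows essentially the same route as the paper: Lemma~\ref{lem1} for one implication of relative adjointness, and for the converse the observation that (iv) gives $x\le y\Rightarrow x\rightarrow y=1$, condition (i) to transfer this to $(c\vee b)\rightarrow(a\rightarrow b)=1$, and condition (ii) together with neutrality of $1$ to convert that back into $c\vee b\le a\rightarrow b$. The only cosmetic difference is that the paper packages the last two steps as a standalone equivalence ``$a\le b$ iff $a\rightarrow b=1$'' before applying it, while you instantiate (ii) directly at the pair $(c\vee b,\,a\rightarrow b)$; you also spell out the monotonicity of $\odot$ from (iii), which the paper dismisses as evident.
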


\begin{proof}
Let $a,b,c\in L$. By Lemma~\ref{lem1}, every member of $\mathcal V$ satisfies one implication of relative adjointness. We prove the converse implication. If $a\rightarrow b=1$ then, using (ii), we infer $a\leq a\vee b=(a\vee b)\odot1=(a\vee b)\odot(a\rightarrow b)\leq b$. Conversely, $a\leq b$ implies $a\rightarrow b=1$ according to (iv). Together,
\[
a\leq b\text{ if and only if }a\rightarrow b=1.
\]
Now, if $(a\vee b)\odot(c\vee b)\leq b$ then $((a\vee b)\odot(c\vee b))\rightarrow b=1$ and, by (i), also $(c\vee b)\rightarrow(a\rightarrow b)=1$ whence $c\vee b\leq a\rightarrow b$. Together, $\mathbf L$ satisfies relative adjointness. The remaining conditions of Definition~\ref{def1} are evident. Thus $\mathbf L$ is a relatively residuated lattice.
\end{proof}

We can show that $\mathcal V$ satisfies one more congruence property than those mentioned in Theorem~\ref{th4}, namely weak regularity. This property expresses the fact that every congruence on some member of $\mathcal V$ is fully determined by its kernel. The precise definition of this notion is as follows.

An {\em algebra} $\mathbf A$ having a constant $1$ is called {\em weakly regular} if for any $\Theta,\Phi\in\Con\mathbf A$ we have that  $[1]\Theta=[1]\Phi$ implies $\Theta=\Phi$. A {\em variety} is called {\em weakly regular} if every of its members has this property.

\begin{theorem}
The variety $\mathcal V$ is arithmetical and weakly regular.
\end{theorem}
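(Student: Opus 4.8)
The arithmeticity part requires no new work: by the preceding theorem every member of $\mathcal V$ is a relatively residuated lattice, so Theorem~\ref{th4} applies verbatim and yields congruence permutability together with congruence distributivity; hence $\mathcal V$ is arithmetical. The whole content of the statement is therefore weak regularity, i.e.\ that $[1]\Theta=[1]\Phi$ implies $\Theta=\Phi$ for all $\Theta,\Phi\in\Con\mathbf L$ and every $\mathbf L\in\mathcal V$.

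For weak regularity the plan is to use the pair of binary terms $x\rightarrow y$ and $y\rightarrow x$. First I would record that $x\rightarrow x=1$ (identity~(iv) with $y:=x$) and that, since members of $\mathcal V$ are relatively residuated lattices, Theorem~\ref{th2}(ii) gives $a\rightarrow b=b\rightarrow a=1$ iff $a=b$. Then, given $\Theta,\Phi\in\Con\mathbf L$ with $[1]\Theta=[1]\Phi$ and a pair $(a,b)\in\Theta$, applying $\rightarrow$ yields $(a\rightarrow b,b\rightarrow b)=(a\rightarrow b,1)\in\Theta$ and similarly $(b\rightarrow a,1)\in\Theta$, so both $a\rightarrow b$ and $b\rightarrow a$ lie in $[1]\Theta=[1]\Phi$, hence $(a\rightarrow b,1),(b\rightarrow a,1)\in\Phi$. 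Finally I would reconstruct the original pair modulo $\Phi$: by Theorem~\ref{th2}(vii) and~(i) one has $b\le a\vee b\le(a\rightarrow b)\rightarrow b$ and $(a\rightarrow b)\rightarrow b\mathrel\Phi 1\rightarrow b=b$, and since $b\le u\le v$ and $v\mathrel\Phi b$ force $u=u\wedge v\mathrel\Phi u\wedge b=b$, this gives $a\vee b\mathrel\Phi b$; the symmetric computation gives $b\vee a\mathrel\Phi a$, and as $a\vee b=b\vee a$ we conclude $(a,b)\in\Phi$. Thus $\Theta\subseteq\Phi$, and interchanging $\Theta$ and $\Phi$ gives $\Theta=\Phi$.

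I expect the reconstruction step, i.e.\ recovering $a\mathrel\Phi b$ from $a\rightarrow b\mathrel\Phi 1$ and $b\rightarrow a\mathrel\Phi 1$, to be the only real point; it is driven by the inequality $a\vee b\le(a\rightarrow b)\rightarrow b$ of Theorem~\ref{th2}(vii) (a weak one-sided substitute for divisibility) together with $1\rightarrow b=b$ of Theorem~\ref{th2}(i). Everything else --- compatibility of $\rightarrow$ with the congruences, symmetry of the hypothesis in $\Theta$ and $\Phi$ --- is routine. Equivalently, one may phrase this via the Maltsev-type condition characterising weakly regular varieties (existence of binary terms $t_1,\dots,t_n$ with $t_1(x,y)=\dots=t_n(x,y)=1$ iff $x=y$), which here is witnessed by $t_1(x,y):=x\rightarrow y$ and $t_2(x,y):=y\rightarrow x$.
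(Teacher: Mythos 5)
Your proof is correct, and for the weak-regularity half it takes a genuinely different route from the paper. The paper disposes of weak regularity in one line by invoking the Mal'cev-type characterisation (Theorem~6.4.3 in \cite{CEL}): a variety with constant $1$ is weakly regular iff there are binary terms $t_1,\dots,t_n$ with $t_1(x,y)=\dots=t_n(x,y)=1$ iff $x=y$, witnessed here by $t_1(x,y):=x\rightarrow y$, $t_2(x,y):=y\rightarrow x$ via Theorem~\ref{th2}(ii) --- exactly the terms you name in your closing remark. You instead give a direct, self-contained verification: from $(a,b)\in\Theta$ you push $a\rightarrow b$ and $b\rightarrow a$ into $[1]\Theta=[1]\Phi$, and then --- this is the step the general theorem hides inside its (nontrivial) proof --- you explicitly reconstruct $(a,b)\in\Phi$ using $b\le a\vee b\le(a\rightarrow b)\rightarrow b\mathrel\Phi 1\rightarrow b=b$ from Theorem~\ref{th2}(vii) and (i), squeezing $a\vee b$ between two $\Phi$-equivalent elements via $\wedge$-compatibility, and symmetrically for $a$. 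All steps check out (the members of $\mathcal V$ are relatively residuated lattices by the preceding theorem, so Theorem~\ref{th2} applies). What the paper's citation buys is brevity; what your argument buys is independence from the external reference and an explicit exhibition of the ``reconstruction'' mechanism, which in this variety is particularly clean because the lattice order lets you recover $a$ and $b$ from $a\vee b$. The arithmeticity part is handled identically in both.
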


\begin{proof}
That $\mathcal V$ is arithmetical, follows from Theorem~\ref{th4}. Weak regularity of $\mathcal V$ is equivalent to the fact that there exists some positive integer $n$ and binary terms $t_1(x,y),\ldots,t_n(x,y)$ such that $t_1(x,y)=\cdots=t_n(x,y)=1$ is equivalent to $x=y$ (cf.\ Theorem~6.4.3 in \cite{CEL}). If we put $n:=2$, $t_1(x,y):=x\rightarrow y$ and $t_2(x,y):=y\rightarrow x$ then according to (ii) of Theorem~\ref{th2} this condition is satisfied.
\end{proof}

Now, we want to extend our previous investigations concerning  lattices to ordered sets.

\begin{definition}\label{def2}
Let $(P,\leq)$ be a poset and $a,b,d\in P$. Then $d$ is called the {\em sectional pseudocomplement of $a$ with respect to $b$} {\rm(}denoted by $a*b${\rm)} if for all $c\in P$,
\[
L(U(a,b),U(c,b))=L(b)\text{ if and only if }d\in U(c,b).
\]
It will be shown that if such an element $d$ exists then it is unique and $d\geq b$. A {\em sectionally pseudocomplemented poset} is an ordered triple $(P,\leq,*)$ such that $(P,\leq)$ is a poset and for all $x,y\in P$, $x*y$ is the sectional pseudocomplement of $x$ with respect to $y$.
\end{definition}

\begin{lemma}
Let $(P,\leq)$ be a poset and $a,b,d\in P$ and assume $d$ to satisfy the condition of Definition~\ref{def2}. Then $d$ is unique, $d\geq b$ and $L(U(a,b),d)=L(b)$.
\end{lemma}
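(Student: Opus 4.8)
The plan is to extract all three conclusions directly from the defining biconditional
\[
L(U(a,b),U(c,b))=L(b)\quad\Longleftrightarrow\quad d\in U(c,b)
\]
by substituting convenient values for $c$, relying only on the elementary facts that $L(S,T)=L(S\cup T)$, that $L(U(x))=L(x)$ for every $x\in P$ (since $x\in U(x)$), and that $U(x,y)=U(x)$ whenever $x\ge y$.

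First I would prove $d\ge b$ by taking $c:=b$. Since $U(a,b)\subseteq U(b)$, the left-hand side of the biconditional becomes $L(U(a,b),U(b))=L(U(b))=L(b)$, which holds; hence the right-hand side $d\in U(b,b)=U(b)$ holds, that is, $d\ge b$. Next, to obtain $L(U(a,b),d)=L(b)$, I would take $c:=d$. Because $d\ge b$ we have $U(d,b)=U(d)$, and since every lower bound of $U(a,b)\cup\{d\}$ is automatically a lower bound of $U(d)$ (it lies below $d$ and hence below every element of $U(d)$), we get $L(U(a,b),U(d,b))=L(U(a,b),d)$. As $d\in U(d,b)$ trivially, the biconditional then forces $L(U(a,b),U(d,b))=L(b)$, so $L(U(a,b),d)=L(b)$.

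For uniqueness, suppose $d'$ also satisfies the condition of Definition~\ref{def2}. The two arguments above apply verbatim with $d'$ in place of $d$, giving $d'\ge b$ and $L(U(a,b),U(d',b))=L(b)$. Feeding this last equality into the biconditional \emph{for $d$} with the choice $c:=d'$ yields $d\in U(d',b)$, in particular $d\ge d'$; interchanging the roles of $d$ and $d'$ gives $d'\ge d$, and therefore $d=d'$.

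I do not expect a genuine obstacle: the proof is just a sequence of well-chosen substitutions. The one place that needs a little care is the translation between the "two-generator" cones $U(c,b)$ appearing in the definition and the principal cones used in the statement, i.e.\ the identity $L(U(a,b),U(d,b))=L(U(a,b),d)$; this is exactly where one must use $d\ge b$ (so that $U(d,b)=U(d)$) together with the collapse $L(U(d))=L(d)$.
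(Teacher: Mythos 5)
Your proof is correct and follows essentially the same route as the paper: both arguments hinge on instantiating the defining biconditional at $c:=b$ to get $d\geq b$ and at $c:=d$ to get $L(U(a,b),U(d,b))=L(b)$. The only (cosmetic) difference is in the uniqueness step, where you conclude $d=d'$ directly by antisymmetry, while the paper packages the same information as the identity $U(d)=\bigcap\{U(c,b)\mid c\in P,\ L(U(a,b),U(c,b))=L(b)\}$.
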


\begin{proof}
The following are equivalent:
\begin{align*}
L(U(a,b),U(b,b)) & =L(b), \\
               d & \in U(b,b), \\
               d & \in U(d,b), \\
L(U(a,b),U(d,b)) & =L(b).
\end{align*}
This shows
\[
U(d)=\bigcap\{U(c,b)\mid c\in P,L(U(a,b),U(c,b))=L(b)\}.
\]
Hence $U(d)$ and therefore also $d$ is unique and, moreover $d\geq b$ and $L(U(a,b),d)=L(b)$.
\end{proof}

We are going to show that if a sectionally pseudocomplemented lattice is considered as a poset then it is surely a sectionally pseudocomplemented poset and, moreover, the sectional pseudocomplements coincide. Hence, Definition~\ref{def2} is sound.

\begin{lemma}
Let $\mathbf L=(L,\vee,\wedge)$ be a lattice, $\mathbf P:=(L,\leq)$ and $a,b\in L$. Then $a*b$ exists in $\mathbf L$ if and only if $a*b$ exists in $\mathbf P$ and in this case they are equal.
\end{lemma}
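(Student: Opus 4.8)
The plan is to reduce the poset condition of Definition~\ref{def2} to a lattice identity and then match it against the lattice definition of $a*b$. First I would note that in $\mathbf P=(L,\le)$ we have $U(a,b)=U(a\vee b)$ and, for each $c\in L$, $U(c,b)=U(c\vee b)$; hence a common lower bound of $U(a,b)$ and $U(c,b)$ is precisely an element below both $a\vee b$ and $c\vee b$, that is, an element of $L\bigl((a\vee b)\wedge(c\vee b)\bigr)$. Since $L(b)$ is the principal ideal of $b$ and two principal ideals coincide iff their generators do, $L(U(a,b),U(c,b))=L(b)$ is equivalent to $(a\vee b)\wedge(c\vee b)=b$. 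Moreover $d\in U(c,b)$ simply means $c\vee b\le d$. Therefore $d$ is the sectional pseudocomplement of $a$ with respect to $b$ in $\mathbf P$ if and only if, for all $c\in L$, $(a\vee b)\wedge(c\vee b)=b\iff c\vee b\le d$; here one should also record that $(a\vee b)\wedge(c\vee b)\ge b$ always holds.

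Next I would prove that this very condition characterises $d=a*b$ in the lattice $\mathbf L$. For the forward direction, if $d$ is the greatest $x$ with $(a\vee b)\wedge x=b$, then $b=(a\vee b)\wedge d\le d$, so $d\ge b$; and for arbitrary $c$, $(a\vee b)\wedge(c\vee b)=b$ forces $c\vee b\le d$ (it is one of the admissible $x$), while $c\vee b\le d$ forces $b\le(a\vee b)\wedge(c\vee b)\le(a\vee b)\wedge d=b$. For the backward direction, suppose the displayed equivalence holds for some $d$; specialising $c:=b$ gives $b\le d$, and then $c:=d$ gives $(a\vee b)\wedge d=b$, so $d$ is admissible; finally, any admissible $x$ automatically satisfies $x\ge b$ (because $(a\vee b)\wedge x\le x$), hence $x\vee b=x$ and the equivalence gives $x=x\vee b\le d$, so $d$ is the greatest admissible element.

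Combining the two characterisations yields the claim at once: $a*b$ exists in $\mathbf L$ iff some $d\in L$ satisfies the common condition iff $a*b$ exists in $\mathbf P$, and in either case $a*b$ is exactly that $d$.

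The computations are all elementary; the one thing to be careful about is the interplay between the two shapes ``$(a\vee b)\wedge x=b$'' (ranging over all $x$) and ``$(a\vee b)\wedge(c\vee b)=b$'' (ranging over joins with $b$). The point I would state explicitly is that every solution $x$ of $(a\vee b)\wedge x=b$ already lies above $b$, so replacing $x$ by $x\vee b$ changes nothing, and dually $(a\vee b)\wedge(c\vee b)$ is automatically $\ge b$; once this is observed, no distributivity or modularity enters anywhere, consistent with the non-modular examples in the paper.
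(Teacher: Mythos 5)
Your proposal is correct and follows essentially the same route as the paper: both reduce the poset condition to the lattice identity $(a\vee b)\wedge(c\vee b)=b$ together with $d\in U(c,b)\Leftrightarrow c\vee b\le d$, and then match this against the definition of the sectional pseudocomplement in the lattice. Your write-up is somewhat more explicit than the paper's chains of equivalences (in particular the specializations $c:=b$, $c:=d$ and the observation that every admissible $x$ lies above $b$), but the underlying argument is the same.
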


\begin{proof}
First assume $a*b$ to exist in $\mathbf L$. Then for all $c\in L$ the following are equivalent:
\begin{align*}
        L(U(a,b),U(c,b)) & =L(b), \\
(a\vee b)\wedge(c\vee b) & =b, \\
                 c\vee b & \leq a*b, \\
                     a*b & \in U(c,b).
\end{align*}
Hence $a*b$ is the sectional pseudocomplement of $a$ with respect to $b$ in $\mathbf P$. Conversely, assume $a*b$ to exist in $\mathbf P$. Then the following are equivalent:
\begin{align*}
                 a*b & \in U(a*b,b), \\
  L(U(a,b),U(a*b,b)) & =L(b), \\
(a\vee b)\wedge(a*b) & =b.
\end{align*}
Moreover, for every $c\in L$ any of the following assertions implies the next one:
\begin{align*}
(a\vee b)\wedge c & =b, \\
 L(U(a,b),U(c,b)) & =L(b), \\
              a*b & \in U(c,b), \\
                c & \leq a*b.
\end{align*}
This shows that $a*b$ is the sectional pseudocomplement of $a$ with respect to $b$ in $\mathbf L$.
\end{proof}

\begin{example}
The poset $\mathbf P_6$ visualized in Fig.~2

\vspace*{-2mm}

\[
\setlength{\unitlength}{7mm}
\begin{picture}(6,9)
\put(3,2){\circle*{.3}}
\put(1,4){\circle*{.3}}
\put(5,4){\circle*{.3}}
\put(1,6){\circle*{.3}}
\put(5,6){\circle*{.3}}
\put(3,8){\circle*{.3}}
\put(3,2){\line(-1,1)2}
\put(3,2){\line(1,1)2}
\put(1,6){\line(0,-1)2}
\put(1,6){\line(2,-1)4}
\put(1,6){\line(1,1)2}
\put(5,6){\line(0,-1)2}
\put(5,6){\line(-2,-1)4}
\put(5,6){\line(-1,1)2}
\put(2.875,1.25){$0$}
\put(.35,3.85){$a$}
\put(5.4,3.85){$b$}
\put(.35,5.85){$c$}
\put(5.4,5.85){$d$}
\put(2.85,8.4){$1$}
\put(2.2,.3){{\rm Fig.~2}}
\end{picture}
\]

\vspace*{-3mm}

is sectionally pseudocomplemented and the operation table for $*$ looks as follows:
\[
\begin{array}{c|cccccc}
* & 0 & a & b & c & d & 1 \\
\hline
0 & 1 & 1 & 1 & 1 & 1 & 1 \\
a & b & 1 & b & 1 & 1 & 1 \\
b & a & a & 1 & 1 & 1 & 1 \\
c & 0 & a & b & 1 & d & 1 \\
d & 0 & a & b & c & 1 & 1 \\
1 & 0 & a & b & c & d & 1
\end{array}
\]
Unfortunately, the poset $\mathbf P_6$ is also relatively pseudocomplemented. In order to obtain a sectionally pseudocomplemented poset which is neither relatively pseudocomplemented nor a lattice we can take the direct product of $\mathbf P_6$ and $\mathbf N_5$. In $\mathbf N_5$ the relative pseudocomplement of $c$ with respect to $a$ does not exist whereas the sectional pseudocomplement of $c$ with respect to $a$ equals $a$.
\end{example}

The definition of a relatively residuated lattice can be modified for poset in the following way:

\begin{definition}
A {\em relatively operator residuated poset} is an ordered quintuple $(P,\leq,M,R,1)$ such that $(P,\leq,1)$ is a poset with $1$, $M$ is a binary operation on $2^P$, $R$ is a mapping from $P^2$ to $2^P$ and for all $a,b,c\in P$ and all $A,B\subseteq P$ the following conditions hold :
\begin{itemize}
\item $M(A,B)\approx M(B,A)$,
\item $M(1,A)\approx M(A,1)\approx L(A)$,
\item $M(U(a,b),U(c,b))\subseteq L(b)$ if and only if $LU(c,b)\subseteq R(a,b)$.
\end{itemize}
The last condition will be called {\em operator relative adjointness}. {\rm(}Here and in the following we will write $M(a,A)$ and $M(A,a)$ instead of $M(\{a\},A)$ and $M(A,\{a\})$, respectively.{\rm)} 
\end{definition}

Similarly as for lattices, we can state and prove the following.

\begin{theorem}\label{th3}
Let $(P,\leq,1)$ be a poset with $1$ and $*$ a binary operation on $P$ and put
\begin{align*}
M(A,B) & :=L(A,B), \\
R(x,y) & :=L(x*y)
\end{align*}
for all $x,y\in P$ and all $A,B\subseteq P$. Then $(P,\leq,*)$ is a pseudocomplemented poset if and only if $(P,\leq,M,R,1)$ is a relatively operator residuated poset.
\end{theorem}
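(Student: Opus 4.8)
The plan is to observe that, once $M$ and $R$ are instantiated as in the statement, the three axioms of a relatively operator residuated poset become trivial, trivial, and a verbatim restatement of the condition of Definition~\ref{def2}. So the whole argument is a matter of manipulating lower and upper cones, and both implications fall out simultaneously.

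First I would dispatch the two easy axioms, which hold irrespective of any property of $*$. Since $M(A,B)=L(A,B)=L(B,A)=M(B,A)$, commutativity of $M$ is immediate. Since $1$ is the greatest element, $L(1)=P$, hence $M(1,A)=L(\{1\},A)=L(1)\cap L(A)=L(A)$ and likewise $M(A,1)=L(A)$. Thus only operator relative adjointness carries content.

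Next I would prove the two cone equivalences that identify operator relative adjointness with the defining property of the sectional pseudocomplement. First, as $b\leq x$ for every $x\in U(a,b)$ and for every $x\in U(c,b)$, we get $b\in LU(a,b)\cap LU(c,b)=L(U(a,b),U(c,b))$; since lower cones are down-sets this yields $L(b)\subseteq M(U(a,b),U(c,b))$ unconditionally, so $M(U(a,b),U(c,b))\subseteq L(b)$ is equivalent to $M(U(a,b),U(c,b))=L(b)$. Second, both $c$ and $b$ lie in $LU(c,b)$, so $LU(c,b)\subseteq L(a*b)=R(a,b)$ forces $c\leq a*b$ and $b\leq a*b$, i.e.\ $a*b\in U(c,b)$; conversely, if $a*b\in U(c,b)$ then every lower bound of $U(c,b)$ is $\leq a*b$, so $LU(c,b)\subseteq L(a*b)=R(a,b)$. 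Combining these, the operator relative adjointness axiom for $(P,\leq,M,R,1)$ reads: for all $a,b,c\in P$, $L(U(a,b),U(c,b))=L(b)$ if and only if $a*b\in U(c,b)$ --- which is exactly the assertion that $a*b$ is the sectional pseudocomplement of $a$ with respect to $b$, for all $a,b\in P$.

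Finally I would read off the two directions. If $(P,\leq,*)$ is a sectionally pseudocomplemented poset, then by Definition~\ref{def2} each $a*b$ satisfies the displayed equivalence, so $(P,\leq,M,R,1)$ satisfies operator relative adjointness and, with the two easy axioms, is a relatively operator residuated poset. Conversely, if $(P,\leq,M,R,1)$ is relatively operator residuated, operator relative adjointness together with the two equivalences above gives the displayed equivalence for all $a,b,c$, so $a*b$ is the sectional pseudocomplement of $a$ with respect to $b$ for all $a,b\in P$, i.e.\ $(P,\leq,*)$ is a sectionally pseudocomplemented poset. I do not expect a genuine obstacle; the only point requiring attention is that the passages ``$\subseteq L(b)$'' versus ``$=L(b)$'' and ``$LU(c,b)\subseteq L(a*b)$'' versus ``$a*b\in U(c,b)$'' are true equivalences rather than single inclusions, which is precisely where one uses $b\in LU(a,b)\cap LU(c,b)$, $c,b\in LU(c,b)$, and the fact that lower cones are down-sets.
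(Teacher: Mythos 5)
Your proof is correct and follows essentially the same route as the paper's: verify the two trivial axioms directly and reduce operator relative adjointness to the defining condition of Definition~\ref{def2} via the two cone equivalences. You merely spell out the justifications (using $b\in L(U(a,b),U(c,b))$ and $c,b\in LU(c,b)$) that the paper's proof states without elaboration.
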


\begin{proof}
We have
\begin{align*}
M(A,B) & \approx L(A,B)\approx L(B,A)\approx M(B,A), \\
M(1,A) & \approx M(A,1)\approx L(1,A)\approx L(A)
\end{align*}
and for all $a,b,c\in P$, $L(U(a,b),U(c,b))=L(b)$ is equivalent to $M(U(a,b),U(c,b))\subseteq L(b)$ and $a*b\in U(c,b)$ is equivalent to $LU(c,b)\subseteq R(a*b)$.
\end{proof}

The next result shows some properties of relatively operator residuated posets analogous to that of Theorem~\ref{th2} for lattices.

\begin{proposition}\label{prop1}
Let $\mathbf P=(P,\leq,M,R,1)$ be a relatively operator residuated poset and $a,b\in P$. Then the following hold:
\begin{enumerate}
\item[{\rm(i)}] $L(a)\subseteq R(1,a)$,
\item[{\rm(ii)}] $a\leq b$ if and only if $R(a,b)=P$,
\item[{\rm(iii)}] $M(U(a),U(a,b))\subseteq L(a)$,
\item[{\rm(iv)}] $L(b)\subseteq R(a,b)$,
\item[{\rm(v)}] if $\mathbf P$ has a $0$ then $M(U(a),U(b))\subseteq\{0\}$ if and only if $L(a)\subseteq R(b,0)$.
\end{enumerate}
\end{proposition}

\begin{proof}
\
\begin{enumerate}
\item[(i)] The following are equivalent:
\begin{align*}
            L(a) & \subseteq L(a), \\
M(U(1,a),U(a,a)) & \subseteq L(a), \\
         LU(a,a) & \subseteq R(1,a), \\
            L(a) & \subseteq R(1,a).
\end{align*}
\item[(ii)] The following are equivalent:
\begin{align*}
               a & \leq b, \\
          U(a,b) & \supseteq U(b), \\
         LU(a,b) & \subseteq L(b), \\
M(U(a,b),U(1,b)) & \subseteq L(b), \\
         LU(1,b) & \subseteq R(a,b), \\
          R(a,b) & =P.
\end{align*}
\item[(iii)] The following are equivalent:
\begin{align*}
         LU(b,a) & \subseteq R(a,a), \\
M(U(a,a),U(b,a)) & \subseteq L(a), \\
  M(U(a),U(a,b)) & \subseteq L(a).
\end{align*}
\item[(iv)] The following are equivalent:
\begin{align*}
  M(U(b),U(b,a)) & \subseteq L(b), \\
M(U(a,b),U(b,b)) & \subseteq L(b), \\
         LU(b,b) & \subseteq R(a,b), \\
            L(b) & \subseteq R(a,b).
\end{align*}
\item[(v)] If $\mathbf P$ has a $0$ then the following are equivalent:
\begin{align*}
    M(U(a),U(b)) & \subseteq\{0\}, \\
M(U(b,0),U(a,0)) & \subseteq L(0), \\
         LU(a,0) & \subseteq R(b,0), \\
            L(a) & \subseteq R(b,0).
\end{align*}
\end{enumerate}
\end{proof}

\begin{corollary}
If $(P,\leq,*,1)$ is a sectionally pseudocomplemented poset and $a,b\in P$ then $a\leq b$ if and only if $a*b=1$.
\end{corollary}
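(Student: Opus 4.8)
The plan is to reduce the claim to Proposition~\ref{prop1}(ii) by means of the translation furnished by Theorem~\ref{th3}. Starting from a sectionally pseudocomplemented poset $(P,\leq,*,1)$, I would set $M(A,B):=L(A,B)$ for all $A,B\subseteq P$ and $R(x,y):=L(x*y)$ for all $x,y\in P$. Theorem~\ref{th3} then tells us that $(P,\leq,M,R,1)$ is a relatively operator residuated poset, so every property recorded in Proposition~\ref{prop1} is at our disposal for this particular $M$ and $R$.

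Next I would apply Proposition~\ref{prop1}(ii), which states that for $a,b\in P$ one has $a\leq b$ if and only if $R(a,b)=P$. Substituting the definition of $R$, this reads: $a\leq b$ if and only if $L(a*b)=P$. To finish, I would note that for any $x\in P$ the equality $L(x)=P$ says exactly that $x$ is an upper bound of $P$, hence the greatest element of $P$; since by hypothesis $1$ is the greatest element of the poset $(P,\leq,1)$, this forces $x=1$, and conversely $L(1)=P$ trivially. Therefore $L(a*b)=P$ holds precisely when $a*b=1$, and chaining the two equivalences yields $a\leq b$ if and only if $a*b=1$.

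The only step that deserves a moment's attention — and the one I would single out as the potential pitfall — is this last translation $L(a*b)=P\iff a*b=1$: it relies essentially on $(P,\leq,1)$ being a poset \emph{with} $1$, i.e.\ on $1$ genuinely being the top element, so that $1$ is the unique element whose lower cone exhausts $P$. Beyond that, the argument is a straightforward specialisation of results already proved, and no computation is required.
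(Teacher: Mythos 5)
Your proposal is correct and follows exactly the paper's route: the paper's proof is the one-line remark that the corollary follows from Theorem~\ref{th3} combined with Proposition~\ref{prop1}(ii), and you have simply spelled out the substitution $R(a,b)=L(a*b)$ and the (correct) observation that $L(a*b)=P$ forces $a*b=1$ because $1$ is the top element. No gap; your version is just a more explicit rendering of the same argument.
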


\begin{proof}
This follows from Theorem~\ref{th3} and (ii) of Proposition~\ref{prop1}.
\end{proof}

Authors' addresses:

Ivan Chajda \\
Palack\'y University Olomouc \\
Faculty of Science \\
Department of Algebra and Geometry \\
17.\ listopadu 12 \\
771 46 Olomouc \\
Czech Republic \\
ivan.chajda@upol.cz

Helmut L\"anger \\
TU Wien \\
Faculty of Mathematics and Geoinformation \\
Institute of Discrete Mathematics and Geometry \\
Wiedner Hauptstra\ss e 8-10 \\
1040 Vienna \\
Austria, and \\
Palack\'y University Olomouc \\
Faculty of Science \\
Department of Algebra and Geometry \\
17.\ listopadu 12 \\
771 46 Olomouc \\
Czech Republic \\
helmut.laenger@tuwien.ac.at
\end{document}